\theoremstyle{plain}
\newtheorem{conjecture}{Conjecture}
\newtheorem{corollary}{Corollary}
\newtheorem{lemma}{Lemma}
\newtheorem{proposition}{Proposition}
\newtheorem*{thmElsfi}{Theorem 2.2 [El Soufi]}
\newtheorem*{thmPerdomo}{Theorem 1.3 [Perdomo]}
\newtheorem*{maintheorem}{Theorem 1}
\numberwithin{equation}{section}
\begin{document}

\title{On the Index of Constant Mean Curvature Hypersurfaces}
\author{E. Colberg, A.M. de Jesus, K. Kinneberg, G. Silva Neto}

\keywords{Constant mean curvature, Index, Stability}
\subjclass{Primary 53C42, 53C20; Secondary 53C65}
\begin{abstract}
In 1968, Simons \cite{Simons68} introduced the concept of index for hypersurfaces immersed into the Euclidean sphere $\mathbb{S}^{n+1}$. Intuitively, the index measures the number of independent directions in which a given hypersurface fails to minimize area. The earliest results regarding the index focused on the case of minimal hypersurfaces. Many such results established lower bounds for the index. More recently, however, mathematicians have generalized these results to hypersurfaces with constant mean curvature. In this paper, we consider hypersurfaces of constant mean curvature immersed into the sphere and give lower bounds for the index under new assumptions about the immersed manifold.
\end{abstract}

\date{\today}
\thanks{Supported by NSF-OISE: 0526008 and CNPq: 451778/2008-1}

\maketitle

\section{Introduction}

The index of minimal hypersurfaces immersed into the Euclidean sphere $\mathbb{S}^{n+1}$ appeared in
the seminal work of Simons \cite{Simons68}. Intuitively, the index measures the number of independent
directions in which the hypersurface fails to minimize area. In this paper, Simons proved, among many results,
that the index of such hypersurfaces is greater than or equal to one, with equality only at totally geodesic spheres.
Later, Urbano \cite{Urbano90} established the lower bound for the case of minimal surfaces into the Euclidean sphere $\mathbb{S}^{3}$. Indeed, he proved that the lower bound is attained on the Clifford torus. Following Urbano's result, El Soufi \cite{Soufi93} proved that the index of a compact non-totally geodesic minimal hypersurface of the sphere not only must be greater than one, but in fact, must be greater than or equal to $n+3$ which correspond to the index of a minimal Clifford torus. Therefore, we have the following result.
\begin{thmElsfi}
Let $\Sigma^n$ be a compact orientable minimal hypersurface immersed into $\mathbb{S}^{n+1}$. Then
\begin{enumerate}
\item either ${Ind}(\Sigma) = 1$ (and $\Sigma$ is a totally geodesic equator
$\mathbb{S}^n \subset \mathbb{S}^{n+1}$);
\item or ${Ind}(\Sigma) \geq n+3$.
\end{enumerate}
\end{thmElsfi}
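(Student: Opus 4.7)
The plan is to use the variational characterization of the index: $\mathrm{Ind}(\Sigma)$ equals the maximum dimension of a subspace $V \subset C^\infty(\Sigma)$ on which the quadratic form $Q(\phi) = \int_\Sigma \bigl(|\nabla \phi|^2 - (|A|^2 + n)\phi^2\bigr)\,dV$ is negative definite. Simons' original work already yields $\mathrm{Ind}(\Sigma) = 1$ in the totally geodesic case, so I may assume throughout that $\Sigma$ is not totally geodesic and aim to produce an explicit $(n+3)$-dimensional test subspace. The natural candidate combines the constant functions with the ambient coordinate functions: writing $x : \Sigma \hookrightarrow \mathbb{S}^{n+1} \subset \mathbb{R}^{n+2}$ for the immersion, set
\[
V = \bigl\{\phi_{a,v} := a + \langle x, v\rangle : a \in \mathbb{R},\ v \in \mathbb{R}^{n+2}\bigr\}.
\]

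First, I would verify $\dim V = n+3$. Minimality of $\Sigma$ in the sphere gives $\Delta x = -nx$ componentwise, so integrating forces $\int_\Sigma \langle x, v\rangle\, dV = 0$ for every $v$; in particular, $\phi_{a,v} \equiv 0$ would imply $a = 0$ and then $\langle x, v\rangle \equiv 0$. Since any compact minimal hypersurface of $\mathbb{S}^{n+1}$ contained in a linear hyperplane must be a totally geodesic equator, the standing hypothesis forces $v = 0$, giving the full dimension $n+3$. Next, a direct calculation using $\Delta\langle x, v\rangle = -n\langle x, v\rangle$ yields the pointwise identity $L\phi_{a,v} = -a(|A|^2+n) - |A|^2\langle x, v\rangle$, and after integration---with $\int_\Sigma \langle x, v\rangle\,dV = 0$ absorbing the middle cross term---one obtains
\[
Q(\phi_{a,v}) = -Ca^2 - 2aB_v - K_v,
\]
where $C := \int_\Sigma(|A|^2+n)\,dV$, $B_v := \int_\Sigma |A|^2\langle x, v\rangle\,dV$ and $K_v := \int_\Sigma |A|^2\langle x, v\rangle^2\,dV$.

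Maximizing this quadratic in $a$ reduces negative definiteness of $Q$ to the inequality $B_v^2 < CK_v$ for every $v \neq 0$ (the line $v = 0$ being settled by $Q(\phi_{a,0}) = -Ca^2$). Cauchy--Schwarz applied to $B_v = \int_\Sigma |A|\cdot(|A|\langle x, v\rangle)\,dV$ yields $B_v^2 \leq K_v \int_\Sigma |A|^2\,dV$, and the strict inequality $\int_\Sigma |A|^2\,dV < C$---which uses only that $\int_\Sigma n\,dV > 0$---promotes this to $B_v^2 < CK_v$. The main obstacle I anticipate is the degenerate possibility $K_v = 0$: this would require $\langle x, v\rangle$ to vanish on the open set $\{|A|^2 > 0\} \subset \Sigma$, which by real analyticity of the minimal immersion would force $\langle x, v\rangle \equiv 0$ on all of $\Sigma$, again contradicting the hyperplane-avoidance statement. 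Once this is handled, $Q$ is negative definite on $V$, so $\mathrm{Ind}(\Sigma) \geq \dim V = n + 3$.
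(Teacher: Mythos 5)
Your argument is correct, but note that the paper does not actually prove this statement: El Soufi's theorem is quoted purely as background, with the proof deferred to the reference [El Soufi, Compositio Math. 1993], where the bound is obtained via conformal transformations of the sphere and harmonic-map techniques---quite different from your elementary test-function computation. Your route is in fact much closer to the paper's own machinery for the CMC case: your test functions $\langle x,v\rangle$ are exactly the paper's $\psi_v=l_v-Hf_v$ specialized to $H=0$, and your identity $J\langle x,v\rangle=|A|^2\langle x,v\rangle$ is the $H=0$ case of the paper's $J\psi_v=|\phi|^2l_v$. The difference is that the paper only sums $Q(\psi_{e_i})$ over a basis to conclude that at least one term is negative (yielding merely ${Ind}_T(\Sigma)\geq1$ in its Corollary), whereas you adjoin the constant functions (legitimate here precisely because the strong index carries no mean-zero constraint) and establish genuine negative definiteness of $Q$ on the full $(n+3)$-dimensional space via the discriminant condition $B_v^2<CK_v$ and Cauchy--Schwarz with the strict gap $\int_\Sigma|A|^2\,dV<C$; that is what extracts the sharp count $n+3$. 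Two small points to tighten: (i) you implicitly use connectedness of $\Sigma$ when propagating the vanishing of $\langle x,v\rangle$; and (ii) the degenerate case $K_v=0$ can be dispatched without invoking real analyticity---if $\langle x,v\rangle$ vanishes on the nonempty open set $\{|A|^2>0\}$, then that open piece of $\Sigma$ is an $n$-dimensional immersed piece of the totally geodesic equator $\mathbb{S}^{n+1}\cap v^{\perp}$ and hence satisfies $A\equiv0$ there, contradicting $|A|^2>0$ directly. With those remarks the proof is sound, and it is arguably more elementary than the original.
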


On the other hand, apart from totally geodesic equators of the sphere, the easiest minimal hypersurfaces
in $\mathbb{S}^{n+1}$ are the minimal Clifford tori. For that reason, it has been conjectured that the minimal Clifford tori are the only minimal hypersurfaces of the sphere with index $n+3$. Thus, we have the following.
\begin{conjecture}
Let $\Sigma^n$ be a compact orientable minimal hypersurface immersed into $\mathbb{S}^{n+1}$. Then
\begin{enumerate}
\item either ${Ind}(\Sigma) = 1$ (and $\Sigma$ is a totally geodesic equator
$\mathbb{S}^n \subset \mathbb{S}^{n+1}$);
\item or ${Ind}(\Sigma) \geq n+3$, with equality if and only if
$\Sigma$ is a minimal Clifford torus
$\mathbb{S}^k(\sqrt{\frac{k}{n}}) \times \mathbb{S}^{n-k}(\sqrt{\frac{n-k}{n}})$.
\end{enumerate}
\end{conjecture}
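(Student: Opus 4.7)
My plan is to isolate the equality case in El Soufi's argument and combine it with the classical rigidity theorems for minimal hypersurfaces in the sphere. The forward direction, that a minimal Clifford torus has index exactly $n+3$, is a direct spectral computation: on $\mathbb{S}^k(\sqrt{k/n})\times\mathbb{S}^{n-k}(\sqrt{(n-k)/n})$ one has $|A|^2\equiv n$, so the Jacobi operator becomes $L=\Delta+2n$, whose spectrum is the Laplace spectrum of a product of round spheres shifted by $2n$. Using separation of variables and the known first Laplace eigenvalues on each factor, one checks that exactly $n+3$ eigenvalues of $L$ are negative.

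The content is the converse. Assume $\mathrm{Ind}(\Sigma)=n+3$. I would revisit the proof of El Soufi's theorem, which produces $n+3$ independent test functions in the negative eigenspace of $L$ from the Euclidean coordinate functions of $\Sigma\hookrightarrow\mathbb{S}^{n+1}\subset\mathbb{R}^{n+2}$ (the position functions $x_1,\dots,x_{n+2}$, together with a height-type function associated with a suitable parallel vector field of $\mathbb{R}^{n+2}$) and a trace estimate controlled by $\int_\Sigma |A|^2$. Saturation of the index bound should force each of these test functions to be a genuine eigenfunction of $L$ for the same negative eigenvalue, and to saturate the trace estimate pointwise. The structural consequence I would aim to extract is $|A|^2\equiv n$ on $\Sigma$. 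Once this constancy is in hand, the Chern--do Carmo--Kobayashi rigidity theorem, which characterizes compact minimal hypersurfaces of the sphere with constant squared norm of the second fundamental form equal to $0$ or $n$, forces $\Sigma$ to be a minimal Clifford torus of the stated type.

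The main obstacle will be promoting the sharp case of El Soufi's integrated inequality to a pointwise identity for $|A|^2$. A trace equality of the form $\sum_i\int_\Sigma(L\phi_i)\phi_i=0$ yields only integrated information unless one further exploits the algebraic structure of the $\phi_i$. The route I would pursue is to use that the position functions separate points on $\Sigma$ and satisfy $\Delta x_i=-n x_i$, so that the eigenvalue condition $L\phi_i=\lambda\phi_i$ combined with Simons' identity for $\Delta|A|^2$ produces an over-determined system forcing $\nabla|A|^2\equiv 0$; then Simons' gap theorem fixes the constant value to be $n$. A secondary difficulty is handling immersed-but-not-embedded $\Sigma$, where the height-type test functions must be chosen with respect to a vector $a\in\mathbb{R}^{n+2}$ whose orbit under the Gauss map separates the relevant sheets. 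If both issues can be overcome, the identification with a Clifford torus is immediate from Chern--do Carmo--Kobayashi.
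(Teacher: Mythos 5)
This statement is labeled a \emph{conjecture} in the paper, and the authors state explicitly that it has not been proven; the paper contains no proof of it, only a survey of partial results (Urbano for $n=2$, Brasil--Delgado--Guadalupe and Nomizu--Smyth under constant scalar curvature, Perdomo under antipodal symmetry). So there is no ``paper's proof'' to compare against, and your proposal should be judged as an attack on an open problem. The forward direction you describe is indeed a routine spectral computation: on the minimal Clifford torus $|A|^2\equiv n$, $J=\Delta+2n$, and counting Laplace eigenvalues below $2n$ on the product (the constant, multiplicity $1$, plus the first eigenvalue $n$ on each factor, multiplicities $k+1$ and $n-k+1$) gives exactly $n+3$. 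That part is fine and is not the issue.

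The converse contains the gap, and it sits exactly where you flag it, but the obstruction is more severe than you suggest. First, El Soufi's test functions are not the raw coordinate functions: his argument composes the immersion with a suitably chosen conformal diffeomorphism of $\mathbb{S}^{n+1}$ (a balancing argument to make the test functions $L^2$-orthogonal to constants), so the ``algebraic structure of the $\phi_i$'' you hope to exploit is already lost. Second, and more fundamentally, $\mathrm{Ind}(\Sigma)=n+3$ only says the maximal dimension of a subspace on which $Q<0$ is $n+3$; it does not force any particular $(n+3)$-dimensional subspace realizing the negativity to coincide with the span of eigenfunctions, so ``saturation of the index bound should force each test function to be a genuine eigenfunction'' does not follow. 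Without that, you never reach the pointwise identity $L\phi_i=\lambda\phi_i$, and the subsequent deduction $|A|^2\equiv\lambda$ collapses. Even granting constancy of $|A|^2$, Simons' integrated identity only yields $|A|^2=0$ or $|A|^2\geq n$, not $|A|^2=n$ (Cartan's isoparametric examples have constant $|A|^2>n$), so you would need to re-inject the index hypothesis at that stage as Brasil--Delgado--Guadalupe do. In short, your endgame matches the known conditional results, but the bridge from the equality $\mathrm{Ind}(\Sigma)=n+3$ to constancy of $|A|^2$ is precisely the open content of the conjecture, and the proposal does not close it.
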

This conjecture has not yet been proven, but many mathematicians
have solved some particular cases. For instance, as pointed out Urbano
\cite{Urbano90} proved that this conjecture is true when $n=2$.
Later on, Brasil, Delgado and Guadalupe, \cite{GBD99} showed that
the conjecture is true for every dimension $n$ provided the scalar
curvature is constant. But, in fact, this follows from a early result due
to Nomizu and Smyth \cite{NS68}.
On the other hand, Perdomo \cite{Perdomo01} has also showed that the conjecture is true for every dimension $n$ with a
hypothesis about the symmetry of the hypersurface. More exactly, he proved.
\begin{thmPerdomo}
Let $\Sigma^n$ be a compact orientable minimal hypersurface immersed
into $\mathbb{S}^{n+1}$, invariant under the antipodal map, and not
a totally geodesic equator. Then ${Ind}(\Sigma) \geq n+3$, with
equality if and only if $\Sigma$ is a minimal Clifford torus.
\end{thmPerdomo}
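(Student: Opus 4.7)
The plan is to exhibit an $(n+3)$-dimensional subspace of $C^\infty(\Sigma)$ on which the second variation of area is strictly negative, equivalently, on which the Jacobi operator $L = \Delta + |A|^2 + n$ is positive-definite. Write the immersion as $\psi\colon \Sigma \to \mathbb{S}^{n+1} \subset \mathbb{R}^{n+2}$ and, for each $v \in \mathbb{R}^{n+2}$, set $g_v = \langle \psi, v\rangle$. Minimality gives $\Delta \psi = -n\psi$, so $L g_v = |A|^2 g_v$ and
\[
\int_\Sigma g_v\, L g_v \;=\; \int_\Sigma |A|^2\, g_v^2 \;\geq\; 0,
\]
with strict inequality whenever $v \neq 0$: the hypothesis that $\Sigma$ is not totally geodesic gives $|A|^2 \not\equiv 0$, and $\Sigma$ cannot lie in any hyperplane of $\mathbb{R}^{n+2}$ (otherwise it would be contained in a smaller round sphere, forcing it to be an equator by minimality). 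Hence $L$ is already positive-definite on the $(n+2)$-dimensional space $W_1 = \{g_v : v \in \mathbb{R}^{n+2}\}$.

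The antipodal hypothesis is what lets us gain the extra direction. Each $g_v$ is antipodally odd while constants are antipodally even, and since the antipodal involution is an isometry of $\Sigma$ the integral of any odd function over $\Sigma$ vanishes. Writing $u = c_0 + g_v$ and expanding yields
\[
\int_\Sigma u\, L u \;=\; c_0^2 \int_\Sigma (|A|^2 + n) \;+\; c_0 \int_\Sigma g_v (2|A|^2 + n) \;+\; \int_\Sigma |A|^2\, g_v^2,
\]
in which the cross term vanishes by parity and the remaining expression is strictly positive for $(c_0, v) \neq 0$. The constant function is independent of $W_1$ (else $\Sigma$ would lie in an affine hyperplane $\{\langle x, v\rangle = 1\}$, incompatible with the antipodal symmetry), so the $(n+3)$-dimensional space $W = \mathbb{R}\cdot 1 \oplus W_1$ is positive for $L$, giving $\mathrm{Ind}(\Sigma) \geq n+3$.

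For the rigidity, assume $\mathrm{Ind}(\Sigma) = n+3$, so that $W$ exhausts the positive subspace of $L$. Because $L$ commutes with the antipodal involution, this positive subspace splits into its $(n+2)$-dimensional odd part (spanned by the $g_v$) and its $1$-dimensional even part (the constants), and any other eigenfunction must have nonpositive eigenvalue. The plan is to feed $L$ a well-chosen antipodally even test function built from the shape operator, say a polynomial in $|A|^2$ or the trace of a Newton transformation, and turn the forced inequality $\int_\Sigma u\, L u \leq 0$ into a pointwise conclusion via the Simons identity $\tfrac{1}{2}\Delta |A|^2 = |\nabla A|^2 + |A|^2(n - |A|^2)$, ultimately pinning down $|A|^2 \equiv n$. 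Minimal hypersurfaces of $\mathbb{S}^{n+1}$ with $|A|^2 \equiv n$ are the Clifford tori $\mathbb{S}^k(\sqrt{k/n}) \times \mathbb{S}^{n-k}(\sqrt{(n-k)/n})$ by Chern--do Carmo--Kobayashi and Lawson, completing the rigidity. The main obstacle is precisely the construction of this even test function: the odd test functions $g_v$ see only the fullness of $\Sigma$ and are silent about where $|A|^2$ deviates from $n$, so one must exploit the remaining even-parity rigidity forced by the equality $\mathrm{Ind}(\Sigma) = n+3$ to extract a Simons-type pointwise bound.
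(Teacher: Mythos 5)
First, a remark on the comparison itself: the paper does not prove this statement — it is quoted verbatim as Perdomo's theorem from \cite{Perdomo01} — so there is no in-paper proof to measure you against, and I will assess the proposal on its own terms. Your argument for the inequality $\mathrm{Ind}(\Sigma)\geq n+3$ is essentially correct and is the standard elementary route that the antipodal hypothesis makes available: minimality gives $J g_v=|A|^2g_v$ and $J1=(|A|^2+n)\cdot 1$, and since the antipodal map induces an isometric involution of $\Sigma$ under which $g_v$ is odd and $|A|^2$ is even, the cross term $\int_\Sigma g_v(2|A|^2+n)\,d\Sigma$ vanishes. One step needs tightening: strict positivity of $\int_\Sigma|A|^2g_v^2\,d\Sigma$ for $v\neq0$ does not follow merely from ``$|A|^2\not\equiv0$ and $\Sigma$ is full''; you must also rule out $g_v$ vanishing identically on the open set where $|A|^2>0$, which you get because a piece of $\Sigma$ contained in the equator $v^{\perp}\cap\mathbb{S}^{n+1}$ is totally geodesic there, contradicting $|A|^2>0$ on that set.

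The genuine gap is the equality case, which is the entire content of the theorem: El Soufi's result, quoted immediately above this statement in the paper, already gives $\mathrm{Ind}(\Sigma)\geq n+3$ for every compact non--totally-geodesic minimal hypersurface, with no symmetry assumption. Your treatment of rigidity is an announced plan, not a proof, and you say so yourself (``the main obstacle is precisely the construction of this even test function''). Two things are missing. First, the forward direction that a minimal Clifford torus actually has index $n+3$ (a spectral computation for $\Delta+2n$ on $\mathbb{S}^k(\sqrt{k/n})\times\mathbb{S}^{n-k}(\sqrt{(n-k)/n})$) is never addressed. Second, for the converse, the even test function you are looking for is essentially $|A|$ itself: Simons' identity together with Kato's inequality yields $Q(|A|)\leq-2n\int_\Sigma|A|^2\,d\Sigma<0$, so $|A|$ is an even negative direction for $Q$; but converting ``the even part of the index equals $1$'' into the pointwise conclusion $|A|^2\equiv n$ requires comparing $|A|$ against the first (positive, even) eigenfunction of $J$ and forcing equality in the Kato and Simons estimates — the spectral argument that constitutes the actual work in Perdomo's paper. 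Relatedly, your assertion that the positive subspace ``splits into its odd part (spanned by the $g_v$) and its even part (the constants)'' is incorrect: neither $g_v$ nor the constants are eigenfunctions of $J$ unless $|A|^2$ is constant. What equality of the index legitimately gives you is only that no even (resp.\ odd) subspace of dimension $2$ (resp.\ $n+3$) can be negative for $Q$, and the rigidity argument must be run from that weaker statement.
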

In his paper, Perdomo observed that the symmetry assumption seems weak
because all of the many known embedded minimal hypersurfaces in  dimension bigger than $3$ of the sphere do have antipodal symmetry.

Besides minimal hypersurfaces of $\mathbb{S}^{n+1}$, a natural generalization is the case of
hypersurfaces with constant mean curvature (CMC) hypersurfaces. In this case, it is more natural
geometrically  to  study  the  weak  index  of  such  hypersurfaces.

Barbosa, do Carmo and Eschenburg \cite{BdCE88} proved that totally umbilical round spheres $\mathbb{S}^n(r) \subset \mathbb{S}^{n+1}$ are the only compact weakly stable CMC hypersurfaces in $\mathbb{S}^{n+1}$. It has also been proven that the minimal Clifford tori have weak index equal to $n+2$ when regarded as CMC hypersurfaces. These results lead us to a natural conjecture.
\begin{conjecture}
Let $\Sigma^n$ be a compact orientable CMC hypersurface immersed into $\mathbb{S}^{n+1}$. Then
\begin{enumerate}
\item either ${Ind}_T(\Sigma) = 0$ (and $\Sigma$ is a totally umbilical sphere in $\mathbb{S}^{n+1}$);
\item or ${Ind}_T(\Sigma) \geq n+2$, with equality if and only if $\Sigma$ is a CMC Clifford torus $\mathbb{S}^k(r) \times \mathbb{S}^{n-k}(\sqrt{1-r^2})$ with radius $\sqrt{\frac{k}{n+2}} \leq r \leq \sqrt{\frac{k+2}{n+2}}$.
\end{enumerate}
\end{conjecture}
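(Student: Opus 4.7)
Since the statement is still a conjecture, I will sketch the approach I would attempt and identify the main obstruction; the paper presumably establishes it in full only under additional hypotheses.

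\emph{Stage 1 (test functions from ambient coordinates).} View $\Sigma^n\hookrightarrow \mathbb{S}^{n+1}\subset\mathbb{R}^{n+2}$, write $X$ for the position vector, $N$ for the unit normal to $\Sigma$ in $\mathbb{S}^{n+1}$, and $H$ for the constant mean curvature. For each $v\in\mathbb{R}^{n+2}$ introduce the coordinate test function $\phi_v=\langle X,v\rangle$ and the support function $\psi_v=\langle N,v\rangle$. The standard identities for $\Delta X$ and $\Delta N$ yield
\[
\Delta\phi_v=-n\phi_v-nH\psi_v
\]
together with a companion formula for $\Delta\psi_v$ coming from the Codazzi equations and the constancy of $H$. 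These feed into the weak stability quadratic form
\[
Q(f)=\int_\Sigma\bigl(|\nabla f|^2-(|A|^2+n)f^2\bigr)\,dV
\]
restricted to the zero-mean subspace $V_0=\{f\in C^\infty(\Sigma):\int_\Sigma f\,dV=0\}$.

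\emph{Stage 2 (an $(n+2)$-dimensional negative subspace).} Subtract appropriate constants so that the recentered functions $\phi_v-\bar\phi_v$ and $\psi_v-\bar\psi_v$ lie in $V_0$, and compute the Gram matrix of $Q$ on the resulting $2(n+2)$-dimensional family. The pointwise identities $\sum_iX_i^2\equiv 1$, $\sum_iX_iN_i\equiv 0$ and $\sum_iN_i^2\equiv 1$ collapse the $Q$-trace to an explicit integral involving $|A|^2$, $H$ and $H^2$. From this trace I would exhibit an $(n+2)$-dimensional subspace of $V_0$ on which the form is negative definite whenever $\Sigma$ is not totally umbilical, invoking the Barbosa--do~Carmo--Eschenburg theorem to take care of the umbilical case.

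\emph{Stage 3 (rigidity and admissible radii).} Assuming $\mathrm{Ind}_T(\Sigma)=n+2$, every test function in $V_0$ lying outside the subspace produced in Stage 2 must give $Q\geq 0$. Combining this with the Simons-type identity for $\tfrac12\Delta|A|^2$ and using equality in the trace inequalities from Stage 2 should force the principal curvatures of $\Sigma$ to take only two distinct values with constant multiplicities $k$ and $n-k$, identifying $\Sigma$ with an isoparametric hypersurface and hence with a CMC Clifford torus $\mathbb{S}^k(r)\times\mathbb{S}^{n-k}(\sqrt{1-r^2})$. The range $\sqrt{k/(n+2)}\leq r\leq \sqrt{(k+2)/(n+2)}$ is then pinned down by an explicit spectral computation of $J=\Delta+|A|^2+n$ on the product, separating variables between the two spherical factors and tracking the radii at which an $(n+3)$rd negative eigenvalue first appears.

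\emph{Where I expect to get stuck.} The decisive step is Stage 2. In the minimal case ($H=0$) treated by El~Soufi and Perdomo, the coordinate functions automatically lie in $V_0$ after a single recentering and the cross term $nH\psi_v$ in $\Delta\phi_v$ is absent, so the two families $\{\phi_v\}$ and $\{\psi_v\}$ decouple and the Gram matrix of $Q$ is essentially block-diagonal. For genuine CMC with $H\neq 0$ the coupling is genuine, and the cross-term $nH\psi_v$ can in principle cancel the negative contribution of $-\int|A|^2\phi_v^2$ when the barycenter of $\Sigma$ sits far from the origin. Controlling this cancellation without an extra geometric hypothesis---symmetry, a bound on the barycenter, or a curvature pinching---is, in my view, the true heart of the problem, and this is consistent with the conjecture remaining open in full generality.
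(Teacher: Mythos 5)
You have correctly identified that this statement is labelled a conjecture in the paper and is not proved there: the paper only records that Al\'ias, Brasil Jr.\ and Perdomo established it under the additional hypothesis of constant scalar curvature, and then proves its own partial result (Theorem 1) under the pinching $|A|^2-2nH^2\geq 0$ together with conditions on $H$. So there is no proof in the paper to match your proposal against, and your honest assessment of where the argument breaks down is the right answer. Your sketch is, moreover, essentially the strategy the paper actually follows for its partial results, with two differences worth noting. First, instead of recentering $\langle X,v\rangle$ and $\langle N,v\rangle$ separately, the paper takes the single combination $\psi_v=l_v-Hf_v$ (in its notation $l_v=\langle\varphi,v\rangle$, $f_v=\langle N,v\rangle$), which is automatically mean-zero since $\psi_v=-\tfrac{1}{n}\Delta l_v$, and which satisfies the clean identity $J\psi_v=|\phi|^2 l_v$; this avoids the Gram-matrix bookkeeping of your Stage 2. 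Second, the trace computation $\sum_i Q(\psi_{e_i})=-\int_\Sigma|\phi|^2\,d\Sigma<0$ yields only \emph{one} negative direction, i.e.\ $\mathrm{Ind}_T\geq 1$; your claim that the trace alone lets you ``exhibit an $(n+2)$-dimensional subspace on which the form is negative definite'' is too optimistic, exactly as you suspect in your final paragraph. To get $n+2$ one must show $Q(\psi_v)<0$ for \emph{every} $v$ together with linear independence of the $\psi_{e_i}$ (the paper's Lemma 1), and it is precisely to kill the cross term $nH(1-H^2)\int l_vf_v$ that the paper imposes $H=\pm1$ or the integral inequalities relating $\int|\nabla l_v|^2$ to $n\int l_v^2$. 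Your diagnosis of the $H$-coupling as the true obstruction is therefore exactly the point the paper's extra hypotheses are designed to address, and the rigidity analysis of your Stage 3 is not attempted in the paper at all.
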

As was first done on the minimal case, recently, Al\'ias, Brasil Jr. and Perdomo \cite{ABP07} showed that this conjecture is true under the additional hypothesis that the hypersurface has constant scalar curvature.

In this paper, we shall present a lower bound for the index of a CMC hypersurface under an additional hypothesis on the second fundamental form. More exactly, we obtain the following result.
\begin{maintheorem}
Let $x:\Sigma^n\hookrightarrow\mathbb{S}^{n+1}$ be a CMC isometric immersion of a compact oriented
manifold $\Sigma^n$. Assume that $|A|^2-2nH^2\geq0$.
\begin{enumerate}
\item If $H=\pm1$ then ${Ind}_T(\Sigma^n)\geq n+2.$
\item If $\int_{\Sigma}|\nabla l_v|^2 d\Sigma \geq n\int_{\Sigma}l_{v}^{2}d\Sigma$ and
$|H|\leq 1,$ then ${Ind}_T (\Sigma^n)\geq n+2.$
\item If $\int_{\Sigma}|\nabla l_v|^2 d\Sigma \leq n\int_{\Sigma}l_{v}^{2}d\Sigma$
and $|H|\geq 1,$ then ${Ind}_T (\Sigma^n)\geq n+2.$
\end{enumerate}
\end{maintheorem}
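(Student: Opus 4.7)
The plan is to exhibit an $(n+2)$-dimensional subspace $V$ of mean-zero functions on which the weak stability quadratic form
\[
I(\phi,\phi) = \int_\Sigma\bigl(|\nabla\phi|^2 - (|A|^2+n)\phi^2\bigr)\,d\Sigma
\]
is negative definite, since this forces $\mathrm{Ind}_T(\Sigma)\geq n+2$. For each $v\in\mathbb{R}^{n+2}$ I introduce the ambient-linear functions $l_v(p):=\langle x(p),v\rangle$ and $f_v(p):=\langle N(p),v\rangle$; from $\Delta x = nHN - nx$ and the analogous computation of $\Delta N$ (using Codazzi, since $H$ is constant) one obtains
\[
\Delta l_v = nH f_v - n l_v, \qquad \Delta f_v = -|A|^2 f_v + nH l_v.
\]
Integrating the first equation over $\Sigma$ gives $\int_\Sigma x\,d\Sigma = H\int_\Sigma N\,d\Sigma$, so the modified test functions $\phi_v := l_v - H f_v$ automatically satisfy $\int_\Sigma\phi_v\,d\Sigma = 0$ for every $v$. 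Provided $\Sigma$ is not contained in a hyperplane of $\mathbb{R}^{n+2}$, the linear map $v\mapsto\phi_v$ is injective and its image $V$ has dimension $n+2$.

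The heart of the proof is a direct computation. Applying $L:=\Delta+|A|^2+n$ to $\phi_v$ gives $L\phi_v = (|A|^2-nH^2)\,l_v$, and using the auxiliary identity
\[
\int_\Sigma|A|^2 l_v f_v\,d\Sigma = n\int_\Sigma l_v f_v\,d\Sigma - nH\int_\Sigma f_v^2\,d\Sigma + nH\int_\Sigma l_v^2\,d\Sigma
\]
(obtained by equating the two expressions for $\int_\Sigma\langle\nabla l_v,\nabla f_v\rangle\,d\Sigma$ that come from integrating by parts against $l_v$ or $f_v$), the quadratic form simplifies to the clean decomposition
\[
I(\phi_v,\phi_v) = -\int_\Sigma(|A|^2-2nH^2)l_v^2\,d\Sigma - nH^2\int_\Sigma f_v^2\,d\Sigma + nH(1-H^2)\int_\Sigma l_v f_v\,d\Sigma.
\]
The hypothesis $|A|^2\geq 2nH^2$ makes the first summand non-positive, and the second is automatically non-positive. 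The three-way case split of the theorem is designed exactly to force the third summand to be non-positive as well: integrating $l_v\Delta l_v$ by parts gives $\int_\Sigma|\nabla l_v|^2\,d\Sigma - n\int_\Sigma l_v^2\,d\Sigma = -nH\int_\Sigma l_v f_v\,d\Sigma$, so the gradient hypotheses in (2) and (3) translate into $H\int_\Sigma l_v f_v\,d\Sigma\leq 0$ and $H\int_\Sigma l_v f_v\,d\Sigma\geq 0$ respectively. Together with $(1-H^2)\geq 0$ when $|H|\leq 1$, $(1-H^2)\leq 0$ when $|H|\geq 1$, and $(1-H^2)=0$ when $H=\pm 1$, the product $nH(1-H^2)\int_\Sigma l_v f_v\,d\Sigma$ is non-positive in every one of the three cases, whence $I(\phi_v,\phi_v)\leq 0$.

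The main obstacle I expect is in the final step of upgrading $I(\phi_v,\phi_v)\leq 0$ to the strict inequality $I(\phi_v,\phi_v)<0$ for every $v\neq 0$, which is what is actually required for $I|_V$ to be negative definite. Equality in the decomposition forces, when $H\neq 0$, both $f_v\equiv 0$ and $(|A|^2-2nH^2)l_v^2\equiv 0$; on the open set $\{|A|^2>2nH^2\}$ a unique-continuation argument then gives $l_v\equiv 0$, and combined with $f_v\equiv 0$ this recovers $v=0$. The delicate case is the Clifford torus, where $|A|^2\equiv 2nH^2$ everywhere so the above open set is empty; this is precisely the equality case that saturates the conjectured index bound and has to be isolated and handled directly.
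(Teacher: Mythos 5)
Your test functions $\psi_v=l_v-Hf_v$, the zero-mean verification, the identity $J\psi_v=|\phi|^2l_v$, the three-term decomposition of $Q(\psi_v)$, and the translation of the gradient hypotheses into a sign for $nH(1-H^2)\int_\Sigma l_vf_v\,d\Sigma$ all coincide with the paper's argument, and those computations check out. The genuine gap is precisely the step you flag and leave open: upgrading $Q(\psi_v)\leq 0$ to the strict inequality. The paper closes it by citing Nomizu--Smyth: if $f_v\equiv 0$ for some $v\neq 0$, then $\Sigma$ is totally geodesic, which is excluded (the theorem, as restated in Section 3, carries the hypothesis that $\Sigma$ is not totally umbilical). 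But in fact the case you single out as delicate --- $|A|^2\equiv 2nH^2$, the borderline Clifford tori --- requires neither unique continuation nor any separate treatment: if $H\neq 0$ and $f_v\equiv 0$, then
\[
0=\Delta f_v=nHl_v-|A|^2f_v=nHl_v,
\]
so $l_v\equiv 0$, hence $v^T=\nabla l_v\equiv 0$ and $v=l_v\varphi+f_vN+v^T=0$. Thus equality in your decomposition with $H\neq 0$ already forces $v=0$ using the second summand alone. For $H=0$ (permitted in your case (2)) strictness instead comes from the first summand: $l_v\equiv 0$ on the nonempty open set $\{|A|^2>0\}$ gives $v^T=\nabla l_v=0$ there, so $v=f_vN$ with $f_v$ locally constant, forcing $A=-\bar\nabla N=0$ on that set, a contradiction unless $v=0$.

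A second, smaller gap is your injectivity criterion for $v\mapsto\psi_v$. The condition ``$\Sigma$ is not contained in a hyperplane'' governs $l_v\equiv 0$, but $\psi_v=0$ means $l_v=Hf_v$, which is weaker. The paper devotes a separate lemma to this point: applying the Laplacian to $l_v-Hf_v\equiv 0$ yields $|\phi|^2l_v\equiv 0$, and one then argues that this forces $\Sigma$ to be totally umbilical, so that $\dim\mathrm{span}\{\psi_{e_1},\dots,\psi_{e_{n+2}}\}=n+2$ in the non-umbilical case. Without that lemma (or an equivalent argument) the claim that your subspace $V$ is $(n+2)$-dimensional is unsupported.
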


\section{Background}

Let us consider $\varphi: \Sigma^n \hookrightarrow \mathbb{S}^{n+1}$, a compact orientable hypersurface immersed
into the unit Euclidean sphere $\mathbb{S}^{n+1}$. We will denote by $A$ the shape operator of $\Sigma$
with respect to a globally defined normal unit vector field $N$.
That is, $A: \mathcal{X}(\Sigma) \rightarrow \mathcal{X}(\Sigma)$ is the endomorphism defined by
\begin{equation} \label{1}
AX = -\tilde{\nabla}_X N = -\bar{\nabla}_X N, {     } X \in
\mathcal{X}(\Sigma)
\end{equation}
where $\tilde{\nabla}$ and $\bar{\nabla}$ denote, respectively, the
Levi-Civita connection on $\mathbb{R}^{n+2}$ and $\mathbb{S}^{n+1}$.
The mean curvature of $\Sigma$ is defined as $$H = \frac{1}{n}
{tr}(A).$$

If $\nabla$ denotes the Levi-Civita connection on $\Sigma$, then the Gauss formula for the
immersion $\varphi$ is given by
\begin{equation} \label{2}
\tilde{\nabla}_X Y = \bar{\nabla}_X Y - \langle X,Y \rangle \varphi =
\nabla_X Y + \langle AX,Y \rangle N - \langle X,Y \rangle \varphi
\end{equation}
for every tangent vector field $X,Y \in \mathcal{X}(\Sigma)$. The covariant derivative
of $A$ is defined by
$$\nabla A(X,Y) = (\nabla_Y A)X = \nabla_Y (AX) - A(\nabla_Y X), {     } X,Y \in \mathcal{X}(\Sigma),$$
and the Codazzi equation is given by
\begin{equation} \label{3}
\nabla A(X,Y) = \nabla A(Y,X)
\end{equation}
for any $X,Y \in \mathcal{X}(\Sigma)$.

Every smooth function $f \in C^{\infty}(\Sigma)$ induces a normal variation
$\varphi_t : \Sigma \rightarrow \mathbb{S}^{n+1}$ of the original immersion $\varphi$,
given by $$\varphi_t = \cos(tf(p))\varphi(p) + \sin(tf(p))N(p) .$$ Since $\varphi_0 = \varphi$ is an
immersion and this is an open condition, there exists an $\varepsilon > 0$ such that every $\varphi_t$ is
also an immersion, for $|t| < \varepsilon$. Then we can consider the area
function $\mathcal{A}: (-\varepsilon, \varepsilon) \rightarrow \mathbb{R}$ given by
$$\mathcal{A}(t) = {Area}(\Sigma_t) = \int_{\Sigma}{d\Sigma_t},$$ where $\Sigma_t$
stands for the manifold $\Sigma$ endowed with the metric induced by $\varphi_t$ from the
Euclidean metric on $\mathbb{S}^{n+1}$, and $d\Sigma_t$ is the $n$-dimensional area element
of that metric on $\Sigma$. The first variation formula for the area (\cite{Lawson80}, Chapter 1, Theorem 4)
establishes that
\begin{equation} \label{4}
\mathcal{A}'(0) = \frac{d}{dt}\left[\mathcal{A}(t)\right]_{t=0} = -n \int_{\Sigma}{fHd\Sigma}.
\end{equation}

The stability operator of this variation problem is given by the second variation formula for the
area (\cite{Lawson80}, Chapter 1, Theorem 32),
\begin{equation} \label{5}
\mathcal{A}''(0) = \frac{d^2}{dt^2}\left[\mathcal{A}(t)\right]_{t=0}
= -\int_{\Sigma}{f(\Delta f + |A|^2f + nf)d\Sigma} = -\int_{\Sigma}{fJfd\Sigma}.
\end{equation}
Here, $J = \Delta + |A|^2 + n$ is the Jacobi operator, where
$\Delta$ stands for the Laplacian operator of $\Sigma$ and $|A|^2 =
{tr}(A^2)$.

When working with constant mean curvature hypersurfaces, it is often convenient to use the traceless
second fundamental form given by $\phi = A - HI$, where $I$ denotes the identity operator on $\mathcal{X}(\Sigma)$.

As a consequence of the first variation formula for the area \ref{4}, we have that $\Sigma$ has constant
mean curvature (not necessarily zero) if and only if $A'(0) = 0$ for every smooth function
$f \in C^{\infty}(\Sigma)$ satisfying the additional condition $\int_{\Sigma}{f d\Sigma} = 0$.

The Jacobi operator induces the quadratic form $Q: C^{\infty}(\Sigma) \rightarrow \mathbb{R}$ acting
on the space of smooth functions on $\Sigma$ defined by $$Q(f) = -\int_{\Sigma}{fJfd\Sigma}.$$

There are two different notions of stability and index, the strong
stability and strong index, denoted by ${Ind}(\Sigma)$, and the weak
stability and weak index, denoted by ${Ind}_T(\Sigma)$. Thus, the
strong index is
$${Ind}(\Sigma) = \max \{ \dim V : V \subset C^{\infty}(\Sigma), \, Q(f) < 0, \, \forall\, f \in V \}$$
and $\Sigma$ is called strongly stable if and only if ${Ind}(\Sigma)
= 0$. On the other hand, the weak index is
$${Ind}_T(\Sigma) = \max \{ \dim V : V \subset C^{\infty}_T(\Sigma),\, Q(f) < 0,\,\forall\, f \in V \}, $$
where $$ C^{\infty}_T(\Sigma) = \{ f \in C^{\infty}(\Sigma) :
\int_{\Sigma}{fd\Sigma} = 0 \} , $$ and $\Sigma$ is called weakly
stable if and only if ${Ind}_T{\Sigma} = 0$.

\subsection{Preliminary Calculations}

Let $\varphi: \Sigma^n \hookrightarrow \mathbb{S}^{n+1}$ be a constant mean curvature isometric immersion of a
compact oriented manifold $\Sigma^n$. For a fixed arbitrary vector $v \in \mathbb{R}^{n+2}$, we will
consider the support functions $l_v = \langle \varphi, v \rangle$ and $f_v = \langle N,v \rangle$ defined on $\Sigma$.
A standard computation, using equations \ref{1} and \ref{2}, shows that the gradient and the
Hessian of the functions $l_v$ and $f_v$ are given by
\begin{equation} \label{6}
\nabla l_v = v^T
\end{equation}
\begin{equation} \label{7}
\nabla^2 l_v(X,Y) := \langle \nabla_X \nabla l_v , Y \rangle = -l_v \langle X,Y \rangle + f_v \langle AX,Y \rangle
\end{equation}
and
\begin{equation} \label{8}
\nabla f_v = -A(v^T)
\end{equation}
\begin{equation} \label{9}
\nabla^2 f_v(X,Y):= \langle \nabla_X \nabla f_v , Y \rangle = -
\langle \nabla A(v^T,X),Y \rangle + l_v \langle AX, Y \rangle - f_v
\langle AX,AY \rangle,
\end{equation}
for every tangent vector field $X,Y \in \mathcal{X}(\Sigma)$. Here, $v^T = v - l_v \varphi - f_v N \in \mathcal{X}(\Sigma)$
denotes the tangential component of $v$ along the immersion $\varphi$.

Equation \ref{7} directly yields $$\Delta l_v = {tr}(\nabla^2 l_v) =
-nl_v + nHf_v.$$ Using the Codazzi equation \ref{3} in equation
\ref{9} we also obtain
$$\Delta f_v = -n \langle v^T , \nabla H \rangle + nH l_v - |A|^2 f_v.$$
In particular , if $H$ is constant we have  $$\Delta f_v =nH l_v -
|A|^2 f_v.$$From here, a direct calculation yields
$$Jl_v = |A|^2 l_v + nHf_v$$ and $$Jf_v = nHl_v + nf_v.$$

\section{Results}

Consider the function $\psi_v = l_v - Hf_v$. Since $|\phi|^2 = |A|^2
- nH^2$, we obtain $$J \psi_v = |\phi|^2 l_v $$ and
$$\int_{\Sigma}{\psi_v d\Sigma} = 0 . $$ Then a straightforward computation yields

\begin{equation} \label{23}
Q(\psi_v) = -\int_{\Sigma}{|\phi|^2 l_v^2 d\Sigma} +
\int_{\Sigma}{|\phi|^2 Hl_vf_v d\Sigma}.
\end{equation}

\begin{proposition}
Let $\varphi: \Sigma^n \hookrightarrow \mathbb{S}^{n+1}$ be a constant mean curvature isometric
immersion of a compact oriented manifold $\Sigma^n$. If $e_1, \ldots, e_{n+2}$ stand for the canonical
vector field of $\mathbb{R}^{n+2}$, then
$$ \sum_{i=1}^{n+2}Q(\psi_{e_i}) = -\int_{\Sigma}|\phi|^2 d\Sigma. $$
\end{proposition}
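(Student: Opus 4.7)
The plan is to start from the explicit formula (23) for $Q(\psi_v)$ and simply sum over the canonical basis $e_1,\dots,e_{n+2}$, exploiting the fact that summing squares/products of components of $\varphi$ and $N$ in an orthonormal basis recovers inner products of those two vectors.

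More concretely, I would substitute $v = e_i$ into (23) to obtain
\[
Q(\psi_{e_i}) \;=\; -\int_\Sigma |\phi|^2\, l_{e_i}^2\, d\Sigma \;+\; H\int_\Sigma |\phi|^2\, l_{e_i} f_{e_i}\, d\Sigma,
\]
and then interchange the finite sum with the integral. The pointwise identities that make everything collapse are
\[
\sum_{i=1}^{n+2} l_{e_i}^2 \;=\; \sum_{i=1}^{n+2} \langle \varphi, e_i\rangle^2 \;=\; |\varphi|^2 \;=\; 1,
\]
because $\varphi$ takes values in the unit sphere, and
\[
\sum_{i=1}^{n+2} l_{e_i} f_{e_i} \;=\; \sum_{i=1}^{n+2} \langle \varphi, e_i\rangle \langle N, e_i\rangle \;=\; \langle \varphi, N\rangle \;=\; 0,
\]
because $N$ is tangent to the sphere along $\varphi$, hence orthogonal to the position vector $\varphi$ in $\mathbb{R}^{n+2}$.

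Plugging these two identities back in, the cross term drops out entirely and the first term reduces to $-\int_\Sigma |\phi|^2\, d\Sigma$, which is exactly the claimed formula. There is essentially no obstacle here: the identity (23) has already packaged the analytic content, so the proposition is a one-line consequence of the Euclidean orthonormality of $\{e_i\}$ together with the geometric facts $|\varphi|=1$ and $\varphi \perp N$ in $\mathbb{R}^{n+2}$.
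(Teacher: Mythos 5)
Your proof is correct and follows essentially the same route as the paper: sum identity \ref{23} over the canonical basis and use $\sum_i l_{e_i}^2=\langle\varphi,\varphi\rangle=1$ and $\sum_i l_{e_i}f_{e_i}=\langle\varphi,N\rangle=0$. The paper's proof is the same computation, written out via the basis expansions of $\langle x,x\rangle$ and $\langle x,N\rangle$.
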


\begin{proof}First, notice that equation \ref{23} yields

$$\sum_{i=1}^{n+2}Q(\psi_{e_i})=-\int_{\Sigma}|\phi | ^2\sum_{i=1}^{n+2}l_{e_i}^2d\sigma
+\int_{\Sigma}H| \phi |^2\sum_{i=1}^{n+2}l_{e_i}f_{e_i}d\sigma.$$

Taking into account that
$$\langle x,x \rangle=\left \langle \sum_{i=1}^{n+2}
\langle x,e_i \rangle e_i,\sum_{j=1}^{n+2}\langle x,e_j\rangle
e_j\right \rangle =\sum_{i=1}^{n+2}\langle
x,e_i\rangle^2=\sum_{i=1}^{n+2}l_{e_i}^2$$ and
$$\langle x,N\rangle=\left\langle\sum_{i=1}^{n+2}\langle x,e_i\rangle e_i,\sum_{j=1}^{n+2}
\langle N,e_j \rangle e_j\right\rangle=\sum_{i=1}^{n+2}\langle
x,e_i\rangle \langle N,e_i\rangle =\sum_{i=1}^{n+2}l_{e_i}f_{e_i}$$
we arrive at
$$\sum_{i=1}^{n+2}Q(\psi_{e_i})=-\int_{\sigma}| \phi
|^{2}d\sigma,$$ which finishes the proof of proposition.
\end{proof}

\begin{corollary}
Let $\varphi: \Sigma^n \hookrightarrow \mathbb{S}^{n+1}$ be a
constant mean curvature isometric immersion of a compact oriented
manifold $\Sigma^n$. Then, up to totally umbilical spheres,
${Ind}_T(\Sigma) \geq 1$.
\end{corollary}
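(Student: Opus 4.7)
The plan is to read off the corollary directly from the preceding proposition, combined with the identity $\int_\Sigma \psi_v\,d\Sigma = 0$ noted just above equation \eqref{23}. The point is that the proposition packages all the geometric content; the corollary is a pigeonhole argument on the $n+2$ terms of the sum.

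First I would observe that if $\Sigma$ is not a totally umbilical sphere, then the shape operator $A$ is not a scalar multiple of the identity at every point, so $\phi = A - HI$ does not vanish identically on $\Sigma$. Hence $\int_\Sigma |\phi|^2\,d\Sigma > 0$, and the proposition yields
\[
\sum_{i=1}^{n+2} Q(\psi_{e_i}) = -\int_\Sigma |\phi|^2\,d\Sigma < 0.
\]
By the pigeonhole principle, at least one index $i_0 \in \{1,\dots,n+2\}$ must satisfy $Q(\psi_{e_{i_0}}) < 0$. In particular $\psi_{e_{i_0}}$ is not identically zero, so $V := \mathrm{span}\{\psi_{e_{i_0}}\}$ is one-dimensional, and $Q(c\psi_{e_{i_0}}) = c^2 Q(\psi_{e_{i_0}}) < 0$ for every nonzero $c$.

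Finally, I need $V \subset C^\infty_T(\Sigma)$, i.e.\ that $\psi_{e_{i_0}}$ has zero mean. This is exactly the second identity recorded above \eqref{23}: $\int_\Sigma \psi_v\,d\Sigma = 0$ for every $v\in\mathbb{R}^{n+2}$. Therefore $V$ is an admissible subspace in the definition of $\mathrm{Ind}_T$, giving $\mathrm{Ind}_T(\Sigma) \geq \dim V = 1$. There is really no obstacle here; the only tiny thing to keep honest is the equivalence ``$|\phi|^2 \equiv 0 \iff \Sigma$ is totally umbilical,'' which follows from the definition of $\phi$, together with the classical fact that compact totally umbilical hypersurfaces of $\mathbb{S}^{n+1}$ are round spheres.
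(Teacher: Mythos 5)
Your proof is correct and follows essentially the same route as the paper: apply the proposition, note that $\int_\Sigma|\phi|^2\,d\Sigma>0$ when $\Sigma$ is not totally umbilical, and use pigeonhole to extract one $\psi_{e_{i_0}}$ with $Q(\psi_{e_{i_0}})<0$. You are in fact slightly more careful than the paper, which leaves implicit both the zero-mean condition $\int_\Sigma\psi_v\,d\Sigma=0$ needed for admissibility in $\mathrm{Ind}_T$ and the quadratic homogeneity of $Q$ on the one-dimensional span.
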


\begin{proof}
If $\Sigma^n$ is not totally umbilical, then $|\phi|^2 > 0$. Thus,
$$ \sum_{i=1}^{n+2}{Q(\psi_{e_i})} < 0 $$ so there is at least one
$\psi_{e_i}$ such that $Q(\psi_{e_i}) < 0$. Hence, ${Ind}_T(\Sigma)
\geq 1$.
\end{proof}

\subsection{Main Result}

Our main result is the following theorem.

\begin{maintheorem}
Let $\varphi: \Sigma^n \hookrightarrow \mathbb{S}^{n+1}$ be a constant mean curvature isometric immersion
of a compact oriented manifold $\Sigma^n$. Assume that $|A|^2 - 2nH^2 \geq 0$ and $\Sigma^n$ is not totally umbilical.
Then the following are true.
\begin{itemize}
\item If $H = \pm 1$, then ${Ind}_T(\Sigma^n) \geq n+2$.
\item If $\int_{\Sigma}{|\nabla l_v|^2 d\Sigma} \geq n\int_{\Sigma}{l_v^2 d\Sigma}$ for all $v \in \mathbb{R}^{n+2}$
and $|H| \leq 1$, then ${Ind}_T(\Sigma^n) \geq n+2$.
\item If $\int_{\Sigma}{|\nabla l_v|^2 d\Sigma} \leq n\int_{\Sigma}{l_v^2 d\Sigma}$ for all $v \in \mathbb{R}^{n+2}$
and $|H| \geq 1$, then ${Ind}_T(\Sigma^n) \geq n+2$.
\end{itemize}
\end{maintheorem}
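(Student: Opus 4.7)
The plan is to exhibit, in each of the three cases, an $(n+2)$-dimensional subspace $V \subset C^\infty_T(\Sigma)$ on which the index form $Q$ is negative definite, the natural candidate being $V := \operatorname{span}\{\psi_{e_1},\ldots,\psi_{e_{n+2}}\}$ with $\psi_v = l_v - H f_v$ as in the preceding proposition. That proposition already delivers $\sum_i Q(\psi_{e_i}) < 0$ whenever $\Sigma$ is not totally umbilical, so the real content of the theorem is to upgrade this \emph{trace} inequality to $Q(\psi_v) < 0$ for every nonzero $v \in \mathbb{R}^{n+2}$.

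The key step is a single rewriting of $Q(\psi_v)$. Starting from equation~(\ref{23}), I would use the Green identity $\int_\Sigma l_v \Delta f_v\,d\Sigma = \int_\Sigma f_v \Delta l_v\,d\Sigma$ together with $\Delta l_v = -n l_v + nHf_v$ and $\Delta f_v = nH l_v - |A|^2 f_v$ to evaluate $\int_\Sigma |A|^2 l_v f_v \, d\Sigma$; combined with $nH \int_\Sigma l_v f_v\,d\Sigma = n\int_\Sigma l_v^2\,d\Sigma - \int_\Sigma |\nabla l_v|^2\,d\Sigma$ (obtained by multiplying $\Delta l_v$ by $l_v$ and integrating), substitution into (\ref{23}) yields after routine algebra
\begin{equation*}
Q(\psi_v) = -\int_\Sigma (|A|^2 - 2nH^2)\, l_v^2\, d\Sigma - nH^2 \int_\Sigma f_v^2\, d\Sigma + (1-H^2)\left(n\int_\Sigma l_v^2\, d\Sigma - \int_\Sigma |\nabla l_v|^2\, d\Sigma\right).
\end{equation*}
Under the standing hypothesis $|A|^2 \geq 2nH^2$ the first summand is $\leq 0$ and the second is always $\leq 0$; the third is $\leq 0$ in case (1) since $1-H^2 = 0$, and in cases (2) and (3) since the two factors $1-H^2$ and $n\int l_v^2-\int|\nabla l_v|^2$ carry opposite signs by the integral hypothesis. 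Thus $Q(\psi_v) \leq 0$ for every $v \in \mathbb{R}^{n+2}$.

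The main obstacle is upgrading ``$\leq 0$'' to strict negativity on $V \setminus \{0\}$ and verifying $\dim V = n+2$. Injectivity of $v \mapsto \psi_v$ amounts to the image of $\varphi - H N$ spanning $\mathbb{R}^{n+2}$, which I would deduce from the fact that a compact non-totally-umbilical hypersurface of $\mathbb{S}^{n+1}$ cannot lie in any proper affine subspace of $\mathbb{R}^{n+2}$. For strictness, I would suppose $Q(\psi_{v_0}) = 0$ with $v_0 \neq 0$ and extract information from the simultaneous vanishing of the three summands: when $H \neq 0$, the second gives $f_{v_0} \equiv 0$, and the first forces $l_{v_0}$ to vanish on the open set where $|A|^2 > 2nH^2$, which is nonempty because the classical rigidity of compact CMC hypersurfaces of $\mathbb{S}^{n+1}$ with constant $|A|^2$, combined with non-umbilicality, rules out $|A|^2 \equiv 2nH^2$; unique continuation applied to the elliptic equation $\Delta l_{v_0} = -n l_{v_0} + nH f_{v_0}$ then forces $l_{v_0} \equiv 0$, giving $\psi_{v_0} \equiv 0$ and contradicting the injectivity just established. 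The case $H=0$ (inside case (2)) is similar: the first summand directly forces $l_{v_0}$ to vanish on the open set where $|A|^2 > 0$, and unique continuation again concludes. This yields $Q$ negative definite on $V$, hence ${Ind}_T(\Sigma) \geq n+2$.
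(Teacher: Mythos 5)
Your core computation is exactly the paper's: the identity
\begin{equation*}
Q(\psi_v) = -\int_\Sigma (|A|^2 - 2nH^2)\, l_v^2\, d\Sigma - nH^2 \int_\Sigma f_v^2\, d\Sigma + (1-H^2)\Bigl(n\int_\Sigma l_v^2\, d\Sigma - \int_\Sigma |\nabla l_v|^2\, d\Sigma\Bigr)
\end{equation*}
is the paper's expansion $Q(\psi_v)=-\int(|\phi|^2-nH^2)l_v^2-nH^2\int f_v^2+nH(1-H^2)\int l_vf_v$ after the substitutions $|\phi|^2-nH^2=|A|^2-2nH^2$ and $nH\int l_vf_v=n\int l_v^2-\int|\nabla l_v|^2$, and your sign analysis of the three terms in the three cases is the paper's. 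The gaps are in your two auxiliary steps. First, your injectivity argument is a non sequitur: $\psi_v\equiv 0$ says that the map $\varphi-HN$ (not $\varphi$) takes values in the hyperplane $v^\perp$, and the fact that a non-umbilical compact hypersurface $\varphi(\Sigma)$ spans $\mathbb{R}^{n+2}$ tells you nothing about $\varphi-HN$, which need not even be an immersion. The paper's Lemma~\ref{dimW} argues instead that $l_v=Hf_v$ forces $0=\Delta(l_v-Hf_v)=|\phi|^2l_v$, hence $l_v=0$ on the open set where $|\phi|^2\neq 0$, which would place an open piece of $\Sigma$ inside the totally geodesic sphere $v^\perp\cap\mathbb{S}^{n+1}$ and make $\phi$ vanish there, a contradiction.

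Second, and more seriously, your strictness argument rests on the claim that $|A|^2\equiv 2nH^2$ is impossible for a non-umbilical compact CMC hypersurface. That claim is false: for a Clifford torus $\mathbb{S}^k(r)\times\mathbb{S}^{n-k}(\sqrt{1-r^2})$ with $k\neq n/2$ one computes $|A|^2-2nH^2=\tfrac{n-2k}{n}\bigl(kt^2-(n-k)t^{-2}\bigr)+\tfrac{4k(n-k)}{n}$ with $t=\sqrt{1-r^2}/r$, and this vanishes for suitable $r$ (e.g.\ $n=3$, $k=1$, $t^2=3\sqrt{2}-4$); constant $|A|^2$ does not force umbilicity, so no rigidity theorem rules this out. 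Consequently the open set $\{|A|^2>2nH^2\}$ may be empty and your unique-continuation step has nothing to start from. The repair is cheap but different from what you wrote: once the second summand gives $f_{v_0}\equiv 0$ and $H\neq 0$, the equation $0=\Delta f_{v_0}=nHl_{v_0}-|A|^2f_{v_0}$ already yields $l_{v_0}\equiv 0$, hence $\psi_{v_0}\equiv 0$, with no appeal to the first summand at all. (The paper instead invokes Nomizu--Smyth to conclude that $f_v\equiv 0$ forces $\Sigma$ to be totally geodesic, contradicting the hypotheses.) Your treatment of the $H=0$ sub-case of part (2) is fine, and is in fact cleaner than the paper's, whose assertion that $-nH^2\int f_v^2<0$ silently assumes $H\neq 0$.
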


Our first objective is to show that if $W = {span} \{ \psi_{e_i}
\}_{i=1}^{n+2}$, where $\{ e_i \}_{i=1}^{n+2}$ is the canonical
frame of $\mathbb{R}^{n+2}$, then (up to totally umbilical spheres)
$\dim W = n+2$. We will then use this fact to prove our main result.

\begin{lemma} \label{dimW}
Let $\varphi: \Sigma^n \hookrightarrow \mathbb{S}^{n+1}$ be a constant mean curvature isometric immersion of a
compact oriented manifold $\Sigma^n$. Then, up to totally umbilical spheres, $\dim W = n+2$.
\end{lemma}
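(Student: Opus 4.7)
The plan is to argue the contrapositive: if $\dim W<n+2$, then $\Sigma$ must be totally umbilical. A nontrivial linear relation $\sum_{i=1}^{n+2}c_i\psi_{e_i}\equiv 0$ is equivalent to the statement $\psi_c\equiv 0$ for the fixed nonzero vector $c=\sum c_ie_i\in\mathbb{R}^{n+2}$, so the goal reduces to showing that the vanishing of a single function $\psi_c$ forces $|\phi|^2\equiv 0$.

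The key input is the identity $J\psi_v=|\phi|^2 l_v$ already recorded just before the proposition. Applying $J$ to $\psi_c\equiv 0$ immediately yields $|\phi|^2 l_c\equiv 0$ on $\Sigma$. Suppose, for contradiction, that $\Sigma$ is not totally umbilical; then the open set $U=\{p:|\phi|^2(p)>0\}$ is nonempty, and on $U$ we must have $l_c\equiv 0$. Combined with $\psi_c=l_c-Hf_c\equiv 0$, this also gives $Hf_c\equiv 0$ on $U$.

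When $H\neq 0$, I would close the argument using the tangential decomposition $c=c^T+l_c\varphi+f_c N$ together with the identity $\nabla l_c=c^T$. On $U$ both $l_c$ and $f_c$ vanish, so $c^T\equiv c$ there; but $l_c\equiv 0$ on the open set $U$ also forces $\nabla l_c=c^T\equiv 0$ on $U$, whence $c=0$, contradicting our choice of $c$. When $H=0$ the function $\psi_c$ coincides with $l_c$, so $l_c\equiv 0$ on all of $\Sigma$, i.e.\ $\varphi(\Sigma)$ lies in $c^\perp\cap\mathbb{S}^{n+1}$, a totally geodesic $\mathbb{S}^n\subset\mathbb{S}^{n+1}$. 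Since $\varphi$ is an immersion of an $n$-manifold into this $n$-sphere, it is a local diffeomorphism onto a totally geodesic hypersurface, so $\Sigma$ is totally geodesic and in particular totally umbilical, again a contradiction.

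The main obstacle is the case $H=0$: the gradient trick used when $H\neq 0$ degenerates there, and one needs the separate geometric observation that any immersion of $\Sigma^n$ whose image lies in a totally geodesic $\mathbb{S}^n$ is forced to be totally geodesic. Everything else is a direct application of the precomputed formulas $J\psi_v=|\phi|^2 l_v$ and $\nabla l_v=v^T$ and the orthogonal decomposition $v=v^T+l_v\varphi+f_vN$.
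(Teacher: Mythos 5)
Your proof is correct, and its skeleton matches the paper's: a nontrivial relation gives $\psi_c\equiv 0$ for some $c\neq 0$, applying the operator gives $|\phi|^2 l_c\equiv 0$ (the paper uses $\Delta(l_c-Hf_c)=|\phi|^2l_c$, you use $J\psi_c=|\phi|^2l_c$; these are the same computation since $\psi_c\equiv0$), and then one must rule out $l_c$ vanishing on the open set where $|\phi|^2>0$. Where you diverge is in that last step. The paper handles all $H$ at once by asserting that $l_v\equiv 0$ on a neighborhood forces the image into a totally geodesic equator $\mathbb{S}^n$, hence umbilicity there --- a step it states rather garbledly (``$\varphi(\Sigma^n)=\mathbb{S}^{n+1}$ on $\mathcal{U}$'') and which, as written, only uses $l_c=0$ and not $f_c=0$. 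You instead split into cases: for $H\neq 0$ you exploit the full strength of $\psi_c\equiv0$ to get $l_c=f_c=0$ on $U$, and then the identity $\nabla l_c=c^T$ together with the decomposition $c=c^T+l_c\varphi+f_cN$ yields $c=0$ directly --- a cleaner and more airtight contradiction that avoids any appeal to the equator; for $H=0$ you recover the paper's equator argument, but applied globally (since then $\psi_c=l_c$ vanishes on all of $\Sigma$), which makes the ``image in a great sphere implies totally geodesic'' step unambiguous. Your version is the more carefully justified of the two; the only thing to note is that the gradient trick would in fact also dispose of the $H=0$ case (on $U$ one gets $c=f_cN$, so $N$ is constant and $A\equiv0$ on $U$, contradicting $|\phi|^2>0$ there), so the case split, while harmless, is not strictly necessary.
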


\begin{proof}
Let us suppose that $\{ \psi_{e_1}, \ldots, \psi_{e_{n+2}} \}$ is a
dependent set, where $\{ e_1, \ldots, e_{n+2} \}$ is the canonical
frame of $\mathbb{R}^{n+2}$. Then, there exist non-null real
constants $a_1, \ldots, a_{n+2}$ such that $$ \sum_{i=1}^{n+2}a_i
\psi_{e_i} = 0 . $$ Thus, considering $v = \sum_{i=1}^{n+2}a_i e_i$,
we conclude that $l_v = Hf_v$. We then know that $0 = \Delta(l_v -
Hf_v) = -nl_v + nHf_v + H|A|^2f_v - nH^2l_v = |\phi|^2 l_v$. We now
claim that this yields $|\phi|^2 = 0$. Indeed, if there exists some
$p \in \Sigma$ such that $|\phi|^2(p) \neq 0$, then there is a
neighborhood $\mathcal{U}$ of $p$ such that $|\phi|^2(q) \neq 0$ for
all $q \in \mathcal{U}$. Thus, $l_v(q) = 0$ for all $q \in
\mathcal{U}$. Hence, $\varphi(\Sigma^n) = \mathbb{S}^{n+1}$ on
$\mathcal{U}$. Since $H$ is constant, we have $\varphi(\Sigma^n) =
\mathbb{S}^{n}$, so $\varphi(\Sigma)$ is totally umbilical.
Therefore, we have $|\phi|^2 \equiv 0$. But this also implies that
$\varphi(\Sigma)$ is totally umbilical. Hence, up to totally
umbilical spheres, $\{ \psi_{e_1}, \ldots, \psi_{e_{n+2}} \}$ is an
independent set.
\end{proof}

We now prove the main theorem.

\begin{proof}
By Lemma \ref{dimW}, it suffices to show that $Q(\psi_v) < 0$ for all $v \in \mathbb{R}^{n+2}$. We observe
that $l_v \Delta f_v = -|A|^2 l_v f_v + nH l_v^2$, so
$$ \int_{\Sigma}H |A|^2 l_v f_v d\Sigma = -H \int_{\Sigma}l_v \Delta f_v d\Sigma + nH^2 \int_{\Sigma}l_v^2 d\Sigma. $$
Also, by Green's formula and the compactness of $\Sigma^n$,
$\int_{\Sigma}l_v \Delta f_v d\Sigma = \int_{\Sigma}f_v \Delta l_v
d\Sigma$, so $$ \int_{\Sigma}H |A|^2 l_v f_v d\Sigma = -H
\int_{\Sigma}f_v \Delta l_v d\Sigma + nH^2 \int_{\Sigma}l_v^2
d\Sigma . $$ Therefore, we have the following.
\begin{eqnarray*}
Q(\psi_v) &= &-\int_{\Sigma}|\phi|^2 l_v^2 d\Sigma + \int_{\Sigma}|\phi|^2 Hl_vf_v d\Sigma \\
&=& -\int_{\Sigma}|\phi|^2 l_v^2 d\Sigma + \int_{\Sigma}H |A|^2 l_v
f_v d\Sigma -n \int_{\Sigma}H^3 l_v f_v d\Sigma \\
&= &-\int_{\Sigma}|\phi|^2 l_v^2 d\Sigma -H \int_{\Sigma}f_v \Delta
l_v d\Sigma + nH^2 \int_{\Sigma}l_v^2 d\Sigma
 - nH^3 \int_{\Sigma}l_v f_v d\Sigma \\
&= &-\int_{\Sigma}|\phi|^2 l_v^2 d\Sigma + nH(1-H^2)
\int_{\Sigma}l_v f_v d\Sigma +
nH^2 \int_{\Sigma}(l_v^2 - f_v^2) d\Sigma \\
&= &-\int_{\Sigma}(|A|^2 - 2n)l_v^2 d\Sigma - n \int_{\Sigma}f_v^2 d\Sigma \\
&\leq& -n \int_{\Sigma}f_v^2 d\Sigma,\\
\end{eqnarray*}
where the last two inequalities come from our assumptions that $H =
\pm 1$ and $|A|^2 - 2nH^2 \geq 0$. If $f_v \not\equiv 0$ for all $v
\in \mathbb{R}^{n+2}$, then we have $Q(\psi_v) < 0$ for all $\psi_v
\in W$, as desired. Now, if $f_v \equiv 0$ for some $v$, then by
(\cite{NS68}, Theorem 1) we know that $\Sigma^n$ is totally
geodesic. This concludes the proof of the first part of the theorem.
To prove the second and third parts of the theorem, let us consider
the expansion
$$ Q(\psi_v) = -\int_{\Sigma}(|\phi|^2 -nH^2)l_v^2 d\Sigma - nH^2 \int_{\Sigma}f_v^2 d\Sigma +
nH(1-H^2)\int_{\Sigma}l_v f_v d\Sigma . $$

By our assumption that $|A|^2 \geq 2nH^2 $, we know that
$-\int_{\Sigma}(|\phi|^2 - nH^2)l_v^2 d\Sigma \leq 0$. Also, if
$\Sigma^n$ is not totally geodesic, $-nH^2 \int_{\Sigma}f_v^2
d\Sigma < 0$. We now want to estimate the third term in the
expansion. To do this, recall that
$$l_v \Delta l_v = -nl_v^2 + nHf_vl_v$$ and $$\frac{1}{2}
\Delta(l_v^2) = l_v \Delta l_v + |\nabla l_v|^2.$$
Then
$$ nH\int_{\Sigma}f_v l_v d\Sigma = -\int_{\Sigma}|\nabla l_v|^2
d\Sigma + n \int_{\Sigma}l_v^2 d\Sigma . $$

Using the hypotheses in the second and third parts, we find that

$$nH(1-H^2) \int_{\Sigma}f_v l_v d\Sigma \leq 0 . $$

Hence, under our assumptions, $Q(\psi_v) < 0$ for all $v$, so the
second and third parts are proved.
\end{proof}

\section{Acknowledgments}

The authors thank their adviser, Professor Abd\^enago Barros, for his invaluable help,
without which this paper would not have been possible. They also thank Professor Maria Helena Noronha,
who coordinated the 2008 International Research Experience for Students (IRES) during which the research
for this paper was done. The authors also gratefully acknowledge support from the
National Science Foundation (NSF-OISE Grant 0526008) and Brazil's
Conselho Nacional de Desenvolvimento Cient\'ifico e Tecnol\'ogico (CNPq Grant 451778/2008-1).

\end{document}